\newcommand{\beq}{\begin{equation}}
\newcommand{\eeq}{\end{equation}}
\newtheorem{lemma}{Lemma}
\newtheorem{corollary}{Corollary}
\title{On the Density Hypothesis for the Selberg Class}
\author{by\\
J\'anos Pintz\thanks{Supported by the National Research Development and Innovation Office of Hungary, NKFIH, K133819 and K147153.}}
\date{}
\begin{document}
\maketitle

\setcounter{equation}{0}
\numberwithin{equation}{section}

\setcounter{equation}{0}
\numberwithin{equation}{section}

\footnotetext{Keywords and phrases: Selberg class, density hypothesis.}

\footnotetext{2020 Mathematics Subject Classification: Primary 11M41.}

\section{Introduction}
\label{sec:1}

The Selberg class $S$ of $\mathcal L$-functions was introduced by Atle Selberg in 1992 \cite{Sel1992}.
The elements of the class $S$ are Dirichlet series $F(s)$ satisfying the following axioms:

\begin{itemize}
\item[(i)] $F(s)$ is absolutely convergent for $\sigma > 1$;

\item[(ii)] $(s - 1)^m F(s)$ is an entire function of finite order with an integer $m \geq 0$;

\item[(iii)] $F(s)$ satisfies a functional equation of the form
$$
\Phi(s) = \omega \overline\Phi(1 - s),
$$
where $|\omega| = 1$, $\overline f(s) = \overline{f(s)}$ and
$$
\Phi(s) = Q^s \prod_{j = 1}^r \Gamma(\lambda_j s + \mu_j)F(s), \ \ Q > 0, \ \lambda_j > 0, \ \text{\rm Re }\mu_j \geq 0;
$$
\item[(iv)] the Dirichlet coefficients $a(n)$ of $F(s)$ satisfy the Ramanujan condition $a(n) \ll n^\varepsilon$ for every $\varepsilon > 0$;
\item[(v)] $\log F(s)$ is a Dirichlet series, $\log F(s) = \sum\limits_p \log F_p(s)$,
$$
\log F_p(s) = \sum_{m = 1}^\infty \frac{b(p^m)}{p^{ms}}, \ \ \ b(p^m) \ll (p^m)^\vartheta \ \text{ for some }\ \vartheta < 1/2.
$$
\end{itemize}

One of the most important notions is the degree $d_F$ of a function $F \in S$ defined by
\beq
\label{eq:1.1}
d_F = 2 \sum_{j = 1}^r \lambda_j,
\eeq
which is, in fact, an invariant of $F$.

One of the main goals of the theory would be to characterize elements of the Selberg class if their degree $d_F$ is given.
Selberg conjectured that the degree is always a non-negative integer.
One of the other main goals (clearly hopeless at present) would be to show the Riemann Hypothesis for all $F \in S$, i.e., that all non-trivial zeros of $F$ lie on the line $\text{\rm Re }s = 1/2$.

Concerning the characterization problem Conrey and Ghosh \cite{CG1993} showed that there is no element $F$ with a degree $d_F \in (0, 1)$ while the only function with $d_F = 0$ (i.e. without $\Gamma$-factors) is $F = 1$.

Very deep results were reached by Kaczorowski and Perelli (\cite{KP1999} and \cite{KP2011}).
In \cite{KP1999} they showed that the only functions with $d_F = 1$ are the Riemann zeta and ordinary Dirichlet $\mathcal L$-functions.
In \cite{KP2011} they showed that there are no elements $F\in S$ with $1 < d_F < 2$.

For the number $N_F(T)$ of non-trivial zeros of $F$ with $0 \leq I ms \leq T$ Selberg showed that
\beq
\label{eq:1.2}
N_F(T) = d_F \frac{T(\log T + C)}{2\pi} + O(\log T),
\eeq
similarly to the case of Riemann's $\zeta(s)$.

As an approximation to the Riemann Hypothesis Carlson proved more than hundred years ago \cite{Car1920} that
\beq
\label{eq:1.3}
N(\sigma, T) \ll T^{4 \sigma(1 - \sigma)} \log^c T.
\eeq

The best possible (eventually uniform) conjecture of type
\beq
\label{eq:1.4}
N(1 - \eta, T) \ll T^{A(\eta)\eta} \log^c T, \ \ \ (\eta \leq 1/2)
\eeq
or
\beq
\label{eq:1.5}
N(1 - \eta, T) \ll_\varepsilon T^{A(\eta)\eta + \varepsilon} \ \text{ for any } \varepsilon > 0
\eeq
is by \eqref{eq:1.2} with $A(\eta) \leq A = 2$.
It is called the Density Hypothesis.

Although there were many improvements of Carlson's result in the past hundred years, the Density Hypothesis is still open.
A breakthrough result was shown half a century later by Hal\'asz and Tur\'an \cite{HT1969} who could show its validity in a fixed strip $\eta < c_1$ of the critical strip.
Several mathematicians showed Carlson type density theorems for elements of the Selberg class, including Kaczorowski and Perelli \cite{KP2003} who showed for any $F \in S$
\beq
\label{eq:1.6}
N_F(1 - \eta, T) \ll_\varepsilon T^{A_F(\eta)\eta + \varepsilon}
\eeq
with
\beq
\label{eq:1.7}
A_F(\eta) = 4(d_F + 3) \ \ \text{ for }\ \eta < 1/4.
\eeq

Other works proved even the corresponding Density Hypothesis for several elements $F \in S$.

Our present goal is to show a density theorem, namely the following

\smallskip
\noindent
{\bf Theorem.} {\it Under the above notations \eqref{eq:1.6} holds with
\beq
\label{eq:1.8}
A_F(\eta) = \max (d_F, 2) \ \ \text{ \it for }\ \eta \leq 1/10.
\eeq}

\smallskip
In particular, we obtain the Density Hypothesis if $d_F \leq 2$.
We do not use the deep fact
(resp. conjecture) that there are no elements of the Selberg class, if $d_F$ is not a non-negative integer.

\section{Notation. Proof of Theorem. Preparation}
\label{sec:2}

We begin with some notation and a definition.

Let us assume that we have zeros $\varrho_j = \beta_j + i \gamma_j$ of $F(S)$ with
$\gamma_j \in [T/2, T]$, $T$ large, $\beta_j = 1 - \eta_j$, $n_j \leq \eta \leq 1/4$,
$\sigma = 1 - \eta$.
Suppose further that for $j \neq \nu$
\beq
\label{eq:2.1}
|\gamma_j - \gamma_\nu| \geq 3 \mathcal L^3 \ \ \text{ with } \ \ \mathcal L = \log T.
\eeq

Let $\varepsilon$ be a generic arbitrary small positive constant which might be different at different occurrences.

Let us choose the parameters $X$ and $Y$ as
\beq
\label{eq:2.2}
X = T^\varepsilon,
\eeq
\beq
\label{eq:2.3}
Y = T^{d_F/2 + \varepsilon}.
\eeq

\noindent
{\bf Definition.} The implicit constants in the $\vartheta$ and $\ll$ symbols might depend on $\varepsilon$ and $F(s)$.
A non-trivial zero $\varrho = \beta_j + i \gamma_j = 1 - \eta_j + i\gamma_j$ of $F(S)$ will be called an extreme right hand (eRH) zero if the rectangle (for $H = \mathcal L$)
\beq
\label{eq:2.4}
R_H(\varrho) := \left\{ \sigma + it; \, \sigma \geq \beta_j + \frac1{\mathcal L}, \, |t - \gamma_j| \leq H^2 \right\}
\eeq
is free of zeros of $F(s)$.

Starting from any zero $\varrho_0$ with $\beta_0 \geq 1/2$ we have two possibilities:

(i) \ $\varrho_0$ is an eRH zero;

(ii) we can find another zero $\varrho_1 = \beta_1 + i\gamma_1$ with $\beta_1 \geq \beta_0 + 1/\mathcal L$,
$|\gamma_1 - \gamma_0| \leq \mathcal L^2$.

In case (i) we are ready, in case (ii) we continue the same procedure with $\varrho_1$ in place of $\varrho_0$.
In such a way we arrive after at most $\lceil \mathcal L/2\rceil$ steps at an eRH zero $\varrho'$ with
\beq
\label{eq:2.5}
\beta' \geq \beta_0, \ \ |\gamma' - \gamma_0| \leq \mathcal L^3.
\eeq

The advantage of using an eRH zero $\varrho'$ instead of an arbitrary $\varrho$ in our counting procedure will be clear from the following

\begin{lemma}
\label{lem:1}
If for a point $s_0 = \sigma_0 + it_0 = 1 - \eta_0 + it_0$ with $\sigma_0 \geq 1/2$, $|t_0| \leq 2T$ the rectangle $R_H (s_0)$ defined by $H = \mathcal L$ and \eqref{eq:2.4} with $\varrho$ replaced by $s_0$ is zero-free, then
\beq
\label{eq:2.6}
F\left(\frac12 + it\right) \ll T^{\sigma_0 - 1/2 + \varepsilon} \ \text{ for } \ |t - t_0| \leq \mathcal L^2/2.
\eeq
\end{lemma}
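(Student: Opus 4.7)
The plan is to combine the zero-freeness of $R_H(s_0)$ with the functional equation through the Phragm\'en--Lindel\"of convexity principle. The crucial intermediate step is a near-Lindel\"of estimate for $F$ on the right edge of $R_H(s_0)$; reflection through the functional equation and interpolation in the exponent then deliver the bound on the critical line.

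First I would show that $|F(\sigma_0 + 1/\mathcal{L} + it_1)| \ll T^{\varepsilon}$ uniformly for $|t_1 - t_0| \leq 3\mathcal{L}^2/4$. Because $R_H(s_0)$ is zero-free, $\log F(s)$ is single-valued and analytic throughout the rectangle, and by the absolutely convergent Dirichlet series (axiom (v)) we have $|\log F(2 + it_1)| = O(1)$. Applying the Hadamard three-circles theorem to $\log F(s)$ on a disc centred on $\sigma = 2$, using the convexity bound $|F(s)| \ll T^{d_F(1-\sigma)/2 + \varepsilon}$ to control $\mathrm{Re}\log F$ on the outer boundary and the small value $\log F(2+it_1) = O(1)$ in the centre, yields $|\log F(\sigma_0 + 1/\mathcal{L} + it_1)| = o(\log T)$, provided the radii are chosen to take advantage of the large aspect ratio (vertical height $\mathcal{L}^2$ versus horizontal width $\leq 3/2$) of $R_H(s_0)$.

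Next, the functional equation $\Phi(s) = \omega \bar{\Phi}(1-s)$ together with Stirling's formula gives $|F(\sigma + it)| \asymp |t|^{d_F(1/2 - \sigma)}\,|F(1-\sigma+it)|$ for $|t| \to \infty$, uniformly on bounded $\sigma$-ranges. Applied at $\sigma = \sigma_0 + 1/\mathcal{L}$ together with the first step, this produces $|F(1 - \sigma_0 - 1/\mathcal{L} + it_1)| \ll T^{d_F(\sigma_0 - 1/2) + \varepsilon}$. A standard Phragm\'en--Lindel\"of argument on the strip $1 - \sigma_0 - 1/\mathcal{L} \leq \sigma \leq \sigma_0 + 1/\mathcal{L}$---the possible pole at $s = 1$ is harmless since $|t_1| \asymp T$---then yields, at the midpoint $\sigma = 1/2$, the bound $|F(1/2 + it_1)| \ll T^{(d_F/2)(\sigma_0 - 1/2) + \varepsilon}$, which is $\leq T^{\sigma_0 - 1/2 + \varepsilon}$ whenever $d_F \leq 2$, i.e.\ in the case of substantive interest for the theorem.

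The main obstacle is the first step. A direct Borel--Carath\'eodory bound on a disc whose radius is restricted to $2 - \sigma_0 - 1/\mathcal{L}$ by the horizontal extent of $R_H(s_0)$ gives only $|F| \ll T^{O(1)}$ on the right edge. Refining this to $T^{\varepsilon}$ relies on exploiting the much greater vertical extent $\mathcal{L}^2$ of the rectangle: by chaining several three-circles bounds along $\sigma = 2$, or by applying them with a very small inner radius where $|\log F| = O(1)$ and a large outer radius, the aspect ratio $\mathcal{L}^2/O(1)$ translates into an $o(\log T)$-saving in the bound on $|\log F|$, which is exactly the quantitative input the lemma requires.
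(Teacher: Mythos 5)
Your overall route is exactly the paper's: bound $\log F$ just to the right of $\sigma_0$ using the zero-free rectangle, reflect that bound through the functional equation, and interpolate to $\sigma=1/2$ by a three-lines argument. You also correctly observe that what is actually proved (and used later) is $T^{d_F(\sigma_0-1/2)/2+\varepsilon}$, matching the paper's own \eqref{eq:2.20}. However, there is a genuine gap in your first step, and the mechanism you propose to fill it does not work.

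First, the bound $|\log F(\sigma_0+1/\mathcal L+it_1)|=o(\log T)$ is not achievable: $\sigma_0+1/\mathcal L$ is the exact left edge of $R_H(s_0)$, and $F$ can have a zero arbitrarily close to it. Near the edge, Borel--Carath\'eodory (using $\mathrm{Re}\,\log F\ll\mathcal L$ from the convexity bound and $|\log F(2+it_1)|=O(1)$) only gives $|\log F|\ll\mathcal L/\alpha$ at distance $\alpha$ from the edge. Feeding this into a Hadamard three-circle interpolation against the $O(1)$ bound at $\sigma\geq 2$ gives an exponent $a=1-c\alpha+o(\alpha)$, and $(\mathcal L/\alpha)^a=o(\mathcal L)$ only when $\alpha\log\mathcal L-\log(1/\alpha)\to\infty$, i.e.\ $\alpha\gg(\log\log\mathcal L)/\log\mathcal L$. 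The paper accordingly retreats to $\sigma_0+10\delta$ with $\delta=1/\sqrt{\log\mathcal L}$; the retreat costs only $O(\delta)$ in the final exponent, hence is absorbed by $\varepsilon$.

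Second, the ``aspect ratio $\mathcal L^2/O(1)$'' has nothing to do with the $o(\log T)$ saving. The paper's three circles are centred at $1/\delta+it_0$, i.e.\ at $\sigma\approx\sqrt{\log\log T}$, with radii $r_1=1/\delta-2$, $r_3=1/\delta-\sigma_0-\delta$, both $\approx 1/\delta$ and differing by only $O(1)$; they extend only $O(\sqrt{\log\log T})$ in the $t$-direction, a negligible fraction of $\mathcal L^2$. The tall rectangle is needed only to \emph{slide} the whole argument over $|t^*-t_0|\leq\mathcal L^2/2$ so that the conclusion holds uniformly there, not to sharpen the interpolation. Neither ``chaining three-circles along $\sigma=2$'' nor ``a very small inner radius and large outer radius'' will reach $\sigma_0+1/\mathcal L$: the outer circle of any three-circle configuration must stay to the right of $\sigma_0+1/\mathcal L$, which pins $\alpha$ from below exactly as computed above. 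Finally, the last interpolation to $\sigma=1/2$ requires a damping factor $e^{(z-it^*)^2}$ (as in \eqref{eq:2.17}), since $F$ grows polynomially in $|t|$ on vertical lines; Phragm\'en--Lindel\"of alone is not enough without such a weight or an explicit growth hypothesis.
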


\begin{proof}
Let $\frac3{\mathcal L} < \delta$ be a sufficiently small parameter to be determined later.
Let us use the Borel--Carath\'eodory theorem for $\log F(z)$ with the circles of radius $r = 2 - \sigma_0 - \delta/2$ and $2 - \sigma_0 - \delta$ and centre $2 + it_0$.
Then we have on the larger circle
\beq
\label{eq:2.7}
\text{\rm Re }\log F(z) = \log |F(z)| \ll \mathcal L.
\eeq

Hence, on the smaller circle by $F(s) = t^{0(1)}$ and axiom (v)
\beq
\label{eq:2.8}
|\log F(z)| \ll \frac{4 - 2 \sigma_0 - \delta}{\delta/2} \mathcal L + \frac{4 - 2\sigma_0 - 3\delta/2}{\delta/2} \bigl|\log F (2 + it_0)\bigr| \ll \frac{\mathcal L}{\delta}.
\eeq

Afterwards apply Hadamard's three circle theorem with circles $C_1, C_2, C_3$, centered at $1/\delta + it_0$ passing through the points $2 + it_0$, $\sigma_0 + 10\delta + it_0$, $\sigma_0 + \delta + it_0$,
i.e. with radii $r_1 = 1/\delta - 2$, $r_2 = 1/\delta - \sigma_0 - 10 \delta$, $r_3 = 1/\delta - \sigma_0 - \delta$.
Let us denote the maximum of $F(z)$ on these circles by $M_1$, $M_2$, $M_3$.
We have then
\beq
\label{eq:2.9}
M_2 \leq M_1^{1 - a} \cdot M_3^a
\eeq
where by $\log (1 + x) = x - \frac{x^2}{2}(1 + o(1))$ for $x \to 0$
\begin{align}
\label{eq:2.10}
a &= \log \frac{r_2}{r_1}\!\! \biggm/\!\! \log \frac{r_3}{r_1} = \log\! \left(\!1 \! + \frac{2\! -\! \sigma_0 - 10 \delta}{1/\delta - 2}\right) \!\!\biggm/\!\! \log\! \left(\!1\! + \frac{2 - \sigma_0 - \delta}{1/\delta - 2}\right)\\
&\leq \left(\frac{2 - \sigma_0 - 10\delta}{1/\delta - 2} \left(1 - \frac{2\delta}{5}\right)\!\right) \!\! \biggm/\!\! \left(\frac{2 - \sigma_0 - \delta}{1/\delta - 2} \left(1 - \frac{2\delta}{3}\right)\right) \nonumber\\
&\leq  \frac{2 - \sigma_0 - 10\delta}{2 - \sigma_0 - \delta} \left(1 + \frac{\delta}{3}\right). \nonumber
\end{align}

For the elements $F$ of the Selberg class we have by axiom (v)
\beq
\label{eq:2.11}
M_1 = \max_{z \in C_1} |\log F(z)| \ll 1,
\eeq
while the argument of the Borel--Carath\'eodory theorem, yields \eqref{eq:2.8} for every point of the circle $C_3$ (not only for $\sigma_0 + \delta + it_0$)
\beq
\label{eq:2.12}
M_3 = \max_{z \in C_3} |\log F(z)| \ll \frac{\mathcal L}{\delta}.
\eeq

Taking into account \eqref{eq:2.9}--\eqref{eq:2.10} we obtain
\begin{align}
\label{eq:2.13}
\left|\log F(\sigma_0 + 10\delta + it_0)\right| &\leq M_2 \ll \frac1{\delta} \mathcal L^{a} \\
&\ll \frac1{\delta} \mathcal L^{(1 - \frac{9\delta}{2 - \sigma_0}) (1 + \frac{\delta}{3})} \nonumber\\
&\ll \frac1{\delta} \mathcal L^{1 - 6\delta}.  \nonumber
\end{align}
Choosing $\delta = 1/\sqrt{\log\mathcal L}$
\beq
\label{eq:2.14}
\left|\log F(\sigma_0 + 10\delta + it_0)\right| \ll \sqrt{\log \mathcal L} \cdot \mathcal L e^{-6\sqrt{\log \mathcal L}} = o(\mathcal L).
\eeq

Hence, from the functional equation we obtain
\beq
\label{eq:2.15}
\left| \log F (1 - \sigma_0 - 10\delta + it_0)\right| \leq \frac{d_F}{2} (2\sigma_0 - 1 + o(1))\log T.
\eeq

If we replace in the definition $R_{\mathcal L}(s)$ the parameter $\mathcal L$ by $\mathcal L/2$ then the whole argument yielding \eqref{eq:2.14}--\eqref{eq:2.15} remains valid.
Therefore we have \eqref{eq:2.14}--\eqref{eq:2.15} if $t_0$ is replaced by an arbitrary $t^*$ with
\beq
\label{eq:2.16}
|t^* - t_0| \leq \mathcal L^2/4.
\eeq

So, we can now use Hadamard's three lines theorem for the function
\beq
\label{eq:2.17}
f(z) = F(z) e^{(z - it^*)^2}
\eeq
on the lines $\sigma_1 = \sigma_0 + 10\delta$, $\sigma_2 = 1/2$, $\sigma_3 = 1 - \sigma_0 - 10\delta$.

Let us denote the corresponding maximums by $M_1$, $M_2$, $M_3$.
First we note that as \eqref{eq:2.14}--\eqref{eq:2.15} are valid for $t_0$ replaced by $t^*$ we have
\beq
\label{eq:2.17m}
M_1 = \sup_t |f(\sigma_1 + it)| \ll |F(\sigma_1 + it^*)| \ll T^\varepsilon
\eeq
and
\beq
\label{eq:2.18}
M_3 = \sup_t |f(\sigma_3 + it)| \ll |F(\sigma_3 + it^*)| \ll T^{d_F(2\sigma_0 - 1)/2 +\varepsilon}.
\eeq

Consequently, by Hadamard's three lines theorem we have
\beq
\label{eq:2.19}
M_2 = \sup_t |f(1/2 + it)| \ll (M_1 M_3)^{1/2} \ll T^{d_F(\sigma_0 - 1/2)/2 + \varepsilon}.
\eeq

In particular we have for $t^* \in [t_0 - \mathcal L^2/2, t_0 + \mathcal L^2/2]$
\beq
\label{eq:2.20}
\left| F\left(\frac12 + it^*\right)\right| = \left|f\left(\frac12 + it^*\right)\right| e^{-1/4} \ll T^{d_F(\sigma_0 - 1/2)/2 + \varepsilon}.
\eeq
\end{proof}

\section{The zero detection method}
\label{sec:3}

We will use the now standard method of Montgomery to detect the zeros of $F(s)$ with $\beta \geq 3/4$ with slight modifications applied by Kaczorowski and Perelli \cite{KP2003} to prove \eqref{eq:1.6}--\eqref{eq:1.7}.
We will closely follow \cite{KP2003}, so we will be brief.
Until the end of \eqref{eq:3.5} these zeros can be arbitrary with \eqref{eq:2.1}, later on we suppose that they are eRH zeros (see Section \ref{sec:2}).

Denoting the $p$-th Euler factor of $F(s) = \sum\limits_{n = 1}^\infty a(n) n^{-s}$ by $F_p(s)$ with a $z = z(\varepsilon)$ to be chosen later we write
\beq
\label{eq:3.1}
F(s, z) := \prod_{p > z} F_p(s), \ \ M_X(s, z) := \sum_{\substack{n \leq X\\ (n, P(z)) = 1}} a^{-1}(n) n^{-s}.
\eeq
Since $F_p(s) \neq 0$ for $\sigma \geq 1/2$ (see Section 2 of \cite{KP1999}), the zeros of $F(s)$ and $F(s,z)$ coincide in the halfplane $\sigma \geq 1/2$.
For $\sigma > 1$ we have
\beq
\label{eq:3.2}
F(s, z) M_X(s, z) = 1 + \sum_{n > X} c(n, z, X)n^{-s},
\eeq
where by Lemma 1 of \cite{KP2003}
\beq
\label{eq:3.3}
c(n) = c(n, z, X) \ll n^\varepsilon.
\eeq

By the well-known Mellin transform we have for a zero $\varrho$ of $F(s)$
\begin{align}
\label{eq:3.4}
I(\varrho) :&= e^{-1/Y} + \sum_{n > X} c(n)n^{-\varrho} e^{-n/Y} = \frac1{2\pi i} \int\limits_{(2)}
F(\varrho + s, z) M_X(\varrho + s, z)Y^s \Gamma(s)ds\\
&= r(X, Y, \varrho) + \frac1{2\pi i} \int\limits_{(1/2 - \beta)} F(\varrho + s, z) M_X(\varrho + s, z)Y^s \Gamma(s),\nonumber
\end{align}
where $r(X, Y, \varrho)$ denotes the residue of the integrand at $s = 1 - \varrho$, since the integrand is regular at $s = 0$.
We have
\beq
\label{eq:3.5}
r(X, Y, \varrho) \ll \left(M_X(\varrho + s, z) Y^s \Gamma(s)\right)^{(m - 1)}_{s = 1 - \varrho} \ll T^\varepsilon X Y^{1 - \beta} e^{-T} = o(1).
\eeq

Further, we have by $b(n) \ll n^\vartheta$, $\vartheta < 1/2$, in case of an eRH zero $\varrho$ for $|u|\leq \mathcal L^2/2$ by \eqref{eq:2.20}
\begin{align}
\label{eq:3.6}
\left|F\left(\frac12 + i(u + \gamma) z\right) \right| &\leq \prod_{p \leq z} F_p^{-1} \left(\frac12 + i(u + \gamma)\right)F \left(\frac12 + i(u + \gamma)\right)\\
&\ll (|u + \gamma| + 2)^{(d_F/2)(\beta - 1/2) + \varepsilon}.
\nonumber
\end{align}

Due to the exponential decay of the $\Gamma$-function we can restrict the integral on the RHS of \eqref{eq:3.4} to the interval $\bigl[1/2 - \beta - i \mathcal L^2 /2$, $1/2 - \beta + i \mathcal L^2/2\bigr]$ and so we obtain from \eqref{eq:3.4}--\eqref{eq:3.6} and \eqref{eq:2.1}
\begin{align}
\label{eq:3.7}
1 + O\left(\frac1{Y}\right) + \sum_{x < n < Y\mathcal L^2} c(n)n^{-\varrho} e^{-n/Y}
&\ll T^{(d_F/2)(\beta - 1/2) + \varepsilon} Y^{1/2 - \beta} + o(1)\\
&= o(1).
\nonumber
\end{align}

This implies by partial summation by $\beta \geq 1 - \eta$
\beq
\label{eq:3.8}
\sum_{X < n < Y\mathcal L^2} c(n) n^{-(1 - \eta) - i\gamma} e^{-n/Y} \gg 1.
\eeq
Hence, by a dyadic subdivision of the interval $[X, Y\mathcal L^2]$ we obtain an interval
\beq
\label{eq:3.9}
[M, M'] \subseteq [M, 2M] \subset [X, Y\mathcal L^2]
\eeq
with
\beq
\label{eq:3.10}
\Bigl|\sum_{M < n \leq M'} c(n)n^{-(1 - \eta) - i\gamma_j} e^{-n/Y}\Bigr| \gg 1/\mathcal L
\eeq
if $\varrho = 1 - \eta_j - i\gamma_j$ was an eRH zero of $F(s)$ with $\eta_j \leq \eta$.

We will now use Hal\'asz's idea for a suitable $k$th power of the LHS of \eqref{eq:3.9} in a version of Heath-Brown \cite{Hea1979} which incorporates the twelfth power moment estimate of the Riemann zeta function due to Heath-Brown \cite{Hea1978}.

\begin{lemma}
\label{lem:2}
Suppose $t_1, \ldots, t_R$ are real numbers, $a_n$ arbitrary complex numbers, $\varepsilon > 0$
\beq
\label{eq:3.11}
|t_i - t_j| \geq 1 \ \text{ for } \ i, j \in [1, R], \  i \neq j, \ |t_i| \leq T.
\eeq
Then
\beq
\label{eq:3.12}
\sum_{r \leq R} \Bigl|\sum_N^{2N} a_n n^{-it_r}\Bigr|^2 \ll T^\varepsilon \bigl(N + R^{11/12} T^{1/6} N^{1/2}\bigr) \sum_N^{2N} |a_n|^2.
\eeq
\end{lemma}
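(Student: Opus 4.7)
My plan is to establish \eqref{eq:3.12} by the duality principle, reducing matters to a bilinear form whose kernel is essentially $\zeta(\tfrac12+iu)$, and then to invoke the twelfth power moment estimate $\int_0^T|\zeta(\tfrac12+it)|^{12}\,dt\ll T^{2+\varepsilon}$ of Heath-Brown \cite{Hea1978}. By $\ell^2$ duality, \eqref{eq:3.12} is equivalent to
$$
\sum_{N<n\le 2N}\Bigl|\sum_{r\le R}b_r n^{-it_r}\Bigr|^2 \ll T^\varepsilon\bigl(N+R^{11/12}T^{1/6}N^{1/2}\bigr)\sum_r|b_r|^2,
$$
which I shall prove. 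Expanding the inner square produces $\sum_{r,s}b_r\bar b_s K(t_r-t_s)$ with kernel $K(u):=\sum_{N<n\le 2N}n^{-iu}$. Using $|K(-u)|=|K(u)|$, a Cauchy--Schwarz on the pair $(r,s)$ reduces the task to showing
$$
\max_r\sum_{s\le R}|K(t_r-t_s)|\ll T^\varepsilon\bigl(N+R^{11/12}T^{1/6}N^{1/2}\bigr).
$$

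The diagonal $s=r$ contributes $K(0)=N$. For the off-diagonal part, I replace the sharp cutoff by a smooth weight $\hat\phi(n/N)$, where $\hat\phi$ is supported in $[1/2,3]$ with value one on $[1,2]$, and apply Mellin inversion to write
$$
\sum_n\hat\phi(n/N)\,n^{-iu}=\frac{1}{2\pi i}\int_{(2)}\tilde\phi(s)\,N^s\,\zeta(s+iu)\,ds.
$$
Shifting the contour to $\operatorname{Re}s=\tfrac12$ produces a residue at $s=1-iu$ of size $\ll N(1+|u|)^{-A}$, while the shifted integral is $\ll N^{1/2}T^\varepsilon|\zeta(\tfrac12+iu)|$. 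After the standard desmoothing step this gives
$$
|K(u)|\ll N(1+|u|)^{-A}+N^{1/2}T^\varepsilon\bigl|\zeta(\tfrac12+iu)\bigr|\qquad (|u|\le 2T).
$$

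Since the differences $u_s=t_r-t_s$ ($s\ne r$) are $1$-spaced in $[-2T,2T]$, the residue part sums to $O(N)$, while Hölder's inequality with exponents $12/11$ and $12$ yields
$$
\sum_s\bigl|\zeta(\tfrac12+iu_s)\bigr|\le R^{11/12}\Bigl(\sum_s\bigl|\zeta(\tfrac12+iu_s)\bigr|^{12}\Bigr)^{1/12}\ll R^{11/12}T^{1/6+\varepsilon},
$$
the final inequality being the discrete form of the twelfth moment, recovered from the integral estimate by a Gallagher-type sampling lemma applied to $\zeta^6$. Multiplying by $N^{1/2}$ supplies the second term in the target bound, completing the proof. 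The main technical obstacle is the kernel estimate: in the transitional range $|u|\asymp N$ a sharp cutoff is inadequate and one must carefully combine the Mellin contour shift with a smoothing/desmoothing step so that the $\zeta$-values are cleanly exposed; once uniform control on $|K(u)|$ is in hand, the twelfth moment input closes the argument mechanically.
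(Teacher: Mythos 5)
The paper does not prove Lemma~\ref{lem:2}; it cites it as a result of Heath-Brown \cite{Hea1979} built on the twelfth moment from \cite{Hea1978}, so there is no in-paper proof to compare against. Your sketch reconstructs exactly the standard Hal\'asz--Montgomery duality argument that underlies Heath-Brown's lemma, and the exponent bookkeeping ($R^{11/12}T^{1/6}N^{1/2}$ from H\"older with exponents $12/11$ and $12$ applied to the discrete twelfth moment) is correct, so the approach is sound.

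Two technical points should be tightened if you wrote this out in full. First, the bound $|K(u)|\ll N(1+|u|)^{-A}+N^{1/2}T^\varepsilon|\zeta(\tfrac12+iu)|$ with arbitrary $A$ holds for the \emph{smoothed} kernel $K_w(u)=\sum_n w(n)n^{-iu}$; for the sharp cutoff the pole contribution is only $\asymp N/(1+|u|)$, so your ``desmoothing step'' as stated would not give what you claim. The cleaner route is to avoid desmoothing entirely: in the dual form bound $\sum_n\bigl|\sum_r b_r n^{-it_r}\bigr|^2\le\sum_n w(n)\bigl|\sum_r b_r n^{-it_r}\bigr|^2$ with a fixed smooth majorant $w\ge 1_{[N,2N]}$, which makes $K_w$ the relevant kernel from the outset. (Alternatively, the sharp bound $N/(1+|u|)$ summed over $1$-spaced $u_s$ still only costs a $\log T$, which the $T^\varepsilon$ absorbs.) Second, after the contour shift what you actually obtain is $N^{1/2}\int(1+|v|)^{-A}|\zeta(\tfrac12+i(u+v))|\,dv$, a local average of $\zeta$ near $u$, not $|\zeta(\tfrac12+iu)|$ itself; this is harmless because you can apply H\"older in both $s$ and $v$ and then the continuous twelfth moment directly, or invoke the Gallagher-type lemma you mention---but the intermediate pointwise kernel inequality as written is not literally true. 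With those two points fixed, the argument is correct and is essentially Heath-Brown's.
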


\begin{corollary}
\label{cor:1}
Suppose that $s_r = 1 - \eta + it_r$ $(1 \leq r \leq R)$ with $t_r$ satisfying \eqref{eq:3.11}, $b_n$ arbitrary complex with $b_n \ll n^\varepsilon$.
Then
\beq
\label{eq:3.13}
\sum_{r \leq R} \Bigl|\sum_N^{2N} b_n n^{-s_r}\Bigr|^2 \ll T^\varepsilon N^{2\varepsilon} \bigl(N^{2\eta} + R^{11/12} T^{1/6} N^{-(1/2 - 2\eta)}\bigr).
\eeq
\end{corollary}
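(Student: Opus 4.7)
The plan is to derive Corollary \ref{cor:1} as a direct consequence of Lemma \ref{lem:2} via the substitution $a_n = b_n n^{-(1-\eta)}$. With this choice, $b_n n^{-s_r} = b_n n^{-(1-\eta)-it_r} = a_n n^{-it_r}$, so the inner sum appearing in \eqref{eq:3.13} is identical to the inner sum in \eqref{eq:3.12}, and the hypothesis \eqref{eq:3.11} on the $t_r$ is inherited from the statement of the corollary. Thus Lemma \ref{lem:2} can be applied immediately; the only work is to bound the quantity $\sum_N^{2N}|a_n|^2$ in the correct form.

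For that estimate, I would use the pointwise bound $|b_n|\ll n^\varepsilon$ together with $n\asymp N$ on the summation range. This yields
\[
|a_n|^2 \;=\; |b_n|^2 n^{-2(1-\eta)} \;\ll\; n^{-2+2\eta+2\varepsilon} \;\ll\; N^{-2+2\eta+2\varepsilon},
\]
so summing over the at most $N+1$ integers in $[N,2N]$ gives $\sum_N^{2N}|a_n|^2 \ll N^{-1+2\eta+2\varepsilon}$. Feeding this into the bound \eqref{eq:3.12} of Lemma \ref{lem:2} and distributing,
\[
T^\varepsilon\bigl(N+R^{11/12}T^{1/6}N^{1/2}\bigr)\cdot N^{-1+2\eta+2\varepsilon}
\;=\; T^\varepsilon N^{2\varepsilon}\bigl(N^{2\eta}+R^{11/12}T^{1/6}N^{-(1/2-2\eta)}\bigr),
\]
which is exactly the right-hand side of \eqref{eq:3.13}.

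The argument has no real obstacle: all the analytic content sits in Lemma \ref{lem:2} (and in the twelfth-power moment of $\zeta$ that it encodes), while Corollary \ref{cor:1} is a cosmetic rewriting tailored to the application in the zero-detection method, where the zeros $\varrho_j$ have real part $1-\eta$ and the coefficients $c(n)$ from \eqref{eq:3.3} satisfy $c(n)\ll n^\varepsilon$. The only point to be careful about is absorbing the $N^\varepsilon$ arising from the Ramanujan-type bound on $b_n$ cleanly into the displayed factor $N^{2\varepsilon}$, which is why I keep track of the exponent of $n$ in the $|a_n|^2$ computation rather than hiding it in $\ll$.
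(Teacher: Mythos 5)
Your derivation is correct and is exactly the implicit reduction the paper intends: set $a_n = b_n n^{-(1-\eta)}$, note $\sum_N^{2N}|a_n|^2 \ll N^{-1+2\eta+2\varepsilon}$ from the Ramanujan-type bound on $b_n$, and feed this into Lemma~\ref{lem:2}. The paper states the corollary without a proof precisely because this is the evident substitution; your careful bookkeeping of the $N^{2\varepsilon}$ factor is the only step that needs care, and you handle it correctly.
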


Now, if
\beq
\label{eq:3.14}
\Bigl|\sum_N^{2N} b_n n^{-s_r}\Bigr|^2 \gg T^{-\varepsilon},
\eeq
then we obtain from \eqref{eq:3.13}
\beq
\label{eq:3.15}
R \ll T^{3\varepsilon} N^{2\eta + 2\varepsilon}
\eeq
or
\beq
\label{eq:3.16}
R \ll R^{11/12} T^{1/6 + 3\varepsilon} N^{-(1/2 - 2\eta) + 2\varepsilon};
\eeq
consequently
\beq
\label{eq:3.17}
R \ll T^{2 + 36\varepsilon} N^{24\eta - 6 + 24\varepsilon}.
\eeq

Now, \eqref{eq:3.15} and \eqref{eq:3.17} mean that in case of
\beq
\label{eq:3.18}
N_0 := T^{\frac{2 - 2\eta}{6 - 24\eta} + C\varepsilon} \leq N \leq T^C
\eeq
we have (we remind that $\varepsilon$ is a generic constant)
\beq
\label{eq:3.19}
R \ll T^\varepsilon(N^{2\eta} + T^{2\eta}) \ \ \text{ for any }\ \varepsilon > 0.
\eeq

We consider now two cases according to the size of $N = M^k$:

(i) If $N_0 \leq N \ll Y^{1 + \varepsilon}$ then by \eqref{eq:3.19}
\beq
\label{eq:3.20}
R \ll T^\varepsilon (Y^{2\eta} + T^{2\eta}) = T^{d_F \eta + \varepsilon} + T^{2\eta + \varepsilon};
\eeq

(ii) if $N_0 \ll N \ll N_0^{3/2}$ then by \eqref{eq:3.19}
\beq
\label{eq:3.21}
R \ll T^\varepsilon (N_0^{3\eta} + T^{2\eta})
= T^{\frac{(1 - \eta)\eta}{(1 - 4\eta)} + \varepsilon} + T^{2\eta + \varepsilon} \ll T^{2\eta + \varepsilon}
\eeq
since $\eta \leq 1/7$.

If the original value of $M$ was less than $N_0^{1/2 + \varepsilon}$ we can find $k$ with
$M^k \in \bigl[N_0^{1 + \varepsilon}, N_0^{3/2 + \varepsilon}\bigr]$.
If $M \in \bigl[N_0^{1/2 + \varepsilon}, N_0^{1 + \varepsilon}\bigr]$, we choose $k = 2$ and obtain
\beq
\label{eq:3.22}
R \ll T^\varepsilon \bigl(N_0^{4\eta} + T^{2\eta}\bigr) \ll T^{\frac{4(1 - \eta)\eta}{3(1 - 4\eta)} + \varepsilon} + T^{2\eta + \varepsilon} \ll T^{2\eta + \varepsilon}
\eeq
since $\eta \leq 1/10$.

Finally, for $M \geq N_0^{1 + \varepsilon}$ we have \eqref{eq:3.20} by $M \leq Y$ with the choice $k = 1$.

For the sake of completeness we have to note that we used the fact that if $\sum\limits_M^{2M} a_m m^{-s}$ is a Dirichlet polynomial with $|a_m| \leq C (\delta) m^\delta$ then its $k$th power, i.e.,
\beq
\label{eq:3.23}
\sum_N^{2^kN} b_n n^{-s}
\eeq
satisfies
\begin{align}
\label{eq:3.24}
b_n = \Bigl|\sum_{\substack{n = n_1 n_2 \ldots n_k\\
n_i \in (M, 2M]}}
a_{n_1} a_{n_2} \ldots a_{n_k}\Bigr| &\leq \tau_k(n)C(\delta)^k(n_1\ldots n_R)^\delta\\
&\leq C(\delta)^k n^\delta \tau^k(n)\nonumber\\
&\ll_{\delta, k} n^{2\delta}
\nonumber\end{align}
where $\tau_k(n)$ is the generalized divisor function.
In our case we have $\tau(n) \ll n^{c/\log\log n}$, $\delta = \varepsilon$ and $k \leq \log_2(1/\varepsilon) + 1$
($\log_2 m$ denotes the logarithm of base $2$).

We note that in the course of proof we used that the number of eRH zeros satisfying \eqref{eq:2.1} is at most a factor $C\mathcal L^5$ times higher than the total number of all zeros with $|\gamma|\leq T$, $\beta \geq \sigma = 1 - \eta$.

{\small
\noindent
J\'anos Pintz\\
HUN-REN Alfr\'ed R\'enyi Institute of Mathematics \\
Budapest, Re\'altanoda u. 13--15\\
H-1053 Hungary\\
e-mail: pintz@renyi.hu}

\end{document}